\documentclass[10pt]{amsart}
\usepackage{amsmath}%
\usepackage{amsfonts}%
\usepackage{amssymb}%
\usepackage{graphicx}%
\usepackage{mathtools}%
\usepackage{xcolor}%
\usepackage{nameref,hyperref,url}
\usepackage[foot]{amsaddr}

\usepackage{pgfplots}
\pgfplotsset{compat=1.15}
\usepackage{mathrsfs}
\usetikzlibrary{arrows}
\usepackage[margin=0.8in]{geometry}
\calclayout

\newtheorem{theorem}{Theorem}
\theoremstyle{plain}

\newtheorem{corollary}{Corollary}

\newtheorem{definition}{Definition}

\newtheorem{lemma}{Lemma}

\newtheorem{remark}{Remark}

\numberwithin{equation}{section}

\begin{document}

\title[Eigendecomposition of the Hessian of the Robin function in orthogonally invariant domains]{Eigendecomposition of the Hessian of the Robin function in Orthogonally invariant domains}
\author{Alejandro Ortega}
\address[A. Ortega]{Dpto. de Matem\'aticas Fundamentales, Facultad de Ciencias, UNED, 28040 Madrid, Spain}
\email{\tt alejandro.ortega@mat.uned.es}
\subjclass[2010]{Primary 35J08, 35R11; Secondary 35A08, 35J05} %
\keywords{Fractional Laplacian,  Non-local Elliptic Problems, Green Function, Robin Function}%

\begin{abstract} 
In this work we analyze the eigendecomposition of the Hessian matrix of the Robin function $\mathcal{R}(x)$ for the spectral fractional Laplacian in orthogonally invariant domains. We prove that, if $\Omega$ a smooth bounded convex domain invariant under the action of an orthogonal transformation $\mathcal{O}$ then, for $\overline{t}\in\{a\in\Omega:\mathcal{O}(a)=a\}$, the gradient vector $\nabla\mathcal{R}(\overline{t})$ is an eigenvector of the Jacobian $D\mathcal{O}$ associated to the eigenvalue $1$. Moreover, if $\Omega$ is a domain invariant under the reflection about a hyperplane $\pi_{v}=\{x\in\mathbb{R}^N:x\cdot v=0\}$, there exists $\eta>0$ such that $\mathbb{H}(\overline{t})v=\eta v$ where $\mathbb{H}$ denotes the Hessian matrix of $\mathcal{R}(x)$. Consequently,  if $\Omega$ is invariant under the reflection about the hyperplanes $\pi_{v_i}$ for a linearly independent set $\{v_1,\ldots,v_N\}$, then the origin is a non degenerate critical point of $\mathcal{R}(x)$. A short proof of the Brezis--Peletier-like formulas for $\nabla \mathcal{R}$ is also provided which allows us to prove the former results for any $0<s<1$.
\end{abstract}

\maketitle


\section{Introduction and Main results}
In this work we analyze the eigendecomposition of the Hessian matrix of the Robin function for the spectral fractional Laplacian $(-\Delta)^s$, $0<s<1$, on orthogonally invariant domains $\Omega\subset\mathbb{R}^N$, $N>2s$, with $C^{2}$ boundary. Precisely, given $\mathcal{O}$ an orthogonal transformation, $\mathcal{O}(x)=Ox$ with $O$ a $N\times N$ matrix such that $OO^\top=\text{I}_{N}$, we assume that $\Omega$ is invariant under the action of $\mathcal{O}$. Let us denote by $\mathcal{F}_{\mathcal{O}}$ the set of points fixed under the action of $\mathcal{O}(x)$, $\mathcal{F}_{\mathcal{O}}=\{a\in\Omega:\mathcal{O}(a)=a\}$. Note that $\mathcal{F}_{\mathcal{O}}$ is a non-empty set as we always have $0\in\mathcal{F}_{\mathcal{O}}$. Since $O$ is an orthogonal matrix, its (maybe complex) eigenvalues $\lambda_j$ are such that $|\lambda_j|=1$ for $j=1,\ldots,N$. Let us then set $\mathcal{E}_{\mathcal{O}}=\{v\in\mathbb{R}^N\ :O\,v=v\}$ be the eigenspace associated to the eigenvalue $\lambda_1=1$.

Next, let $G_{\Omega}(x,t)$ be the Green function centered at $t\in\Omega$ of the spectral fractional Laplacian $(-\Delta)^s$. It is known that $G_{\Omega}(x,t)=G_{\mathbb{R}^N}(x,t)-H_{\Omega}(x,t)$,
where 
\begin{equation*}
G_{\mathbb{R}^N}(x,t)=\frac{c_{N,s}}{|x-t|^{N-2s}}\qquad \text{with}\qquad c_{N,s}=\frac{\Gamma\left(\frac{N-2s}{2}\right)}{2^{2s}\pi^{\frac{N}{2}}\Gamma(s)},
\end{equation*}
and $H_{\Omega}(x,t)$ is referred to as the regular part of the Green function. Actually, $H_{\Omega}(x,t)\in C^{\infty}(\Omega\times\Omega)$ for $(x,t)\in\Omega\times\Omega$ (cf. \cite[Lemma 2.4]{Choi2014}). The Robin function $\mathcal{R}(x)$ is then defined as the diagonal part of $H_{\Omega}(x,t)$,
\begin{equation*}
\mathcal{R}(x)=H_{\Omega}(x,x)\qquad \text{for }x\in\Omega.
\end{equation*}
Our main result reads as follows.
\begin{theorem}\label{ThF}
Let $\Omega\subset\mathbb{R}^N$ be a smooth bounded convex domain invariant under the action of an orthogonal transformation $\mathcal{O}(x)$. Then, 
\begin{equation}\label{pisto0}
\nabla \mathcal{R}(\overline{t})\in\mathcal{E}_{\mathcal{O}}\quad\text{for }\overline{t}\in\mathcal{F}_{\mathcal{O}}.
\end{equation}
Moreover, given $\overline{t}\in\mathcal{F}_{\mathcal{O}}$, we have
\begin{equation}\label{pisto}
\mathbb{H}(\overline{t})[\mathcal{E}_{\mathcal{O}}^\perp]\subset\mathcal{E}_{\mathcal{O}}^\perp\qquad\text{and}\qquad\mathbb{H}(\overline{t})[\mathcal{E}_{\mathcal{O}}]\subset\mathcal{E}_{\mathcal{O}}
\end{equation}
If, in addition, $\text{dim}(\mathcal{E}_{\mathcal{O}})=N-1$, so that $\mathcal{E}_{\mathcal{O}}^{\perp}=span\{v\}$, for some unitary vector $v\in\mathbb{R}^N$, then
\[\frac{\partial}{\partial v}\mathcal{R}(\overline{t})=0\quad\text{for}\ \overline{t}\in\pi_v\cap\Omega,\]
and there exists $\eta>0$ such that
\begin{equation}\label{pisto2}
\mathbb{H}(\overline{t})v=\eta v.
\end{equation}
\end{theorem}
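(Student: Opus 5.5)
The plan is to first show that $\mathcal{R}$ is invariant under $\mathcal{O}$, and then obtain \eqref{pisto0} and \eqref{pisto} by differentiating that identity at fixed points. The positivity of $\eta$ in \eqref{pisto2} needs a separate monotonicity argument, which I expect to be the main obstacle.

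\emph{Invariance.} Let $(\lambda_k,\varphi_k)$ be the Dirichlet eigenpairs of $-\Delta$ in $\Omega$, so that
\[
G_\Omega(x,t)=\sum_k \lambda_k^{-s}\varphi_k(x)\varphi_k(t).
\]
Since $\Omega$ is $\mathcal{O}$-invariant and $\Delta$ commutes with orthogonal maps, $\varphi_k\circ\mathcal{O}$ is again an $L^2$-normalized eigenfunction with the same eigenvalue. Hence $\{\varphi_k\circ\mathcal{O}\}$ is another orthonormal basis of each eigenspace. The kernel $\sum_{\lambda_k=\lambda}\varphi_k(x)\varphi_k(t)$ is the kernel of the orthogonal projection onto that eigenspace, so it does not depend on the chosen basis. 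Therefore $G_\Omega(Ox,Ot)=G_\Omega(x,t)$. Because $|Ox-Ot|=|x-t|$, the fundamental solution is also invariant, and so $H_\Omega(Ox,Ot)=H_\Omega(x,t)$. In particular $\mathcal{R}(Ox)=\mathcal{R}(x)$ for all $x\in\Omega$.

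\emph{First and second derivatives.} Differentiating $\mathcal{R}(Ox)=\mathcal{R}(x)$ gives
\[
O^\top\nabla\mathcal{R}(Ox)=\nabla\mathcal{R}(x),\qquad O^\top\,\mathbb{H}(Ox)\,O=\mathbb{H}(x).
\]
At $\overline{t}\in\mathcal{F}_{\mathcal{O}}$ the first identity becomes $O\nabla\mathcal{R}(\overline{t})=\nabla\mathcal{R}(\overline{t})$, which is \eqref{pisto0}. The second becomes $\mathbb{H}(\overline{t})O=O\,\mathbb{H}(\overline{t})$. So if $Ow=w$, then $O\mathbb{H}w=\mathbb{H}Ow=\mathbb{H}w$, i.e. $\mathbb{H}(\overline{t})[\mathcal{E}_{\mathcal{O}}]\subset\mathcal{E}_{\mathcal{O}}$. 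Since $\mathbb{H}$ is symmetric, it also preserves $\mathcal{E}_{\mathcal{O}}^\perp$, giving \eqref{pisto}.

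\emph{The reflection case.} When $\dim\mathcal{E}_{\mathcal{O}}=N-1$, $O$ has the eigenvalue $-1$ on $\operatorname{span}\{v\}$, so $\mathcal{O}$ is the reflection about $\pi_v$ and $\mathcal{F}_{\mathcal{O}}=\pi_v\cap\Omega$. Then:
\begin{itemize}
\item $\partial_v\mathcal{R}(\overline{t})=\nabla\mathcal{R}(\overline{t})\cdot v=0$, because $\nabla\mathcal{R}(\overline{t})\in\mathcal{E}_{\mathcal{O}}=v^\perp$.
\item The invariance of the one-dimensional space $\mathcal{E}_{\mathcal{O}}^\perp$ gives $\mathbb{H}(\overline{t})v=\eta v$ with $\eta=\partial^2_{vv}\mathcal{R}(\overline{t})$.
\end{itemize}

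\emph{Positivity of $\eta$ (main obstacle).} I would try the following route.

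\begin{enumerate}
\item Write the Brezis--Peletier-type identity for $\nabla\mathcal{R}$ announced in the abstract. Using the Caffarelli--Silvestre/Stinga--Torrea extension of the spectral operator, this should take the form
\[
\partial_v\mathcal{R}(t)=c\int_{\partial\Omega\times(0,\infty)}y^{1-2s}\bigl|\partial_\nu \tilde G(\cdot,t)\bigr|^2\,(\nu\cdot v)\,d\sigma\,dy,
\]
with $c>0$ and $\tilde G$ the extended Green function. I would obtain it from a Pohozaev identity with multiplier $v\cdot\nabla\tilde G$ on the cylinder with a small ball around the pole removed.
\item For $\tau>0$ and $t_\tau=\overline{t}+\tau v$, reflect the Green function about the hyperplane $\pi_v+\tau v$. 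This reflected plane cuts off a cap of $\Omega$ on the side $\nu\cdot v>0$, and by convexity and symmetry the reflected cap stays inside $\Omega$.
\item On the reflected cap, compare $\tilde G(\cdot,t_\tau)$ with its reflection using the maximum principle for the extension problem. This moving-plane comparison should show that the positive part $\{\nu\cdot v>0\}$ of the boundary integral strictly dominates, so $\partial_v\mathcal{R}(t_\tau)>0$ for small $\tau>0$. By symmetry $\partial_v\mathcal{R}(t_{-\tau})<0$, so $\eta\ge0$.
\item To get $\eta>0$ strictly, differentiate the identity of step 1 in $t$ at $\overline{t}$. Use that $\partial_{t_v}\tilde G(\cdot,\overline{t})$ is odd under the reflection, and apply a Hopf-type strict inequality to the difference of the Green function and its reflection on the half cylinder.
\end{enumerate}

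Making this last strictness step rigorous for general $0<s<1$, including control of the weight $y^{1-2s}$ near $y=0$ in the Hopf lemma, is where I expect the real difficulty.
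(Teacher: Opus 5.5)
\textbf{Gap: the strict sign $\eta>0$ is not proved.}

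Your treatment of \eqref{pisto0}, \eqref{pisto}, of $\partial_v\mathcal{R}(\overline t)=0$ on $\pi_v\cap\Omega$, and of $\mathbb{H}(\overline t)v=\eta v$ for some $\eta\in\mathbb{R}$ is correct. It is essentially the paper's first argument: differentiate $\mathcal{R}\circ\mathcal{O}=\mathcal{R}$ and use that $\mathbb{H}(\overline t)$ commutes with $O$. You get the invariance of the Green function from the spectral resolution rather than from the weak formulation of Lemma~\ref{lemmasymetric}. That is fine, provided you read it at the operator level: $(-\Delta)^{-s}$ commutes with the unitary map $f\mapsto f\circ\mathcal{O}$, so its kernel is invariant a.e., hence everywhere off the diagonal by continuity.

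The strict sign $\eta>0$ is the only nontrivial assertion of the theorem, and your proposal does not establish it.
\begin{itemize}
\item Steps 2--3 could at best give $\eta\ge0$, and even that is only asserted. Comparing $G(\cdot,t_\tau)$ with its reflection on the cap gives interior information. You give no mechanism turning it into the claimed domination of the $\{\nu\cdot v>0\}$ part of the boundary integral.
\item Step 4 is where the proof has to happen, but the decisive ingredient is missing. You never say which function has a definite sign on a half-cylinder, why it does, or where convexity enters.
\item At $\overline t$ the Green function coincides with its reflection, so ``the difference of the Green function and its reflection'' vanishes identically. Obtaining it as a limit from $t_\tau$ destroys strict inequalities.
\end{itemize}

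\textbf{How the paper closes it.} The missing idea is the regular auxiliary function
\[
V=\mathcal{U}(\cdot,\overline t,\cdot)\cdot v=(\partial_{x_v}+\partial_{t_v})G_{\mathcal{C}_\Omega}(\cdot,\overline t,\cdot)=-(\partial_{x_v}+\partial_{t_v})H_{\mathcal{C}_\Omega}(\cdot,\overline t,\cdot).
\]
Its properties are:
\begin{itemize}
\item the singularities cancel, and it has zero Neumann data at $y=0$;
\item it vanishes on $(\pi_v\cap\Omega)\times(0,\infty)$ by Lemma~\ref{lemmaodd};
\item on the lateral boundary it equals $\partial_vG=-|\nabla G|\,(\nu\cdot v)$, which is $\ge0$ over $\Omega_1=\{x\in\Omega:x\cdot v<0\}$ by convexity.
\end{itemize}
The maximum principle then gives $V>0$ in $\mathcal{C}_{\Omega_1}$, and Hopf gives $\partial_v\mathcal{V}(\overline t)<0$. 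Finally \eqref{loki2}, $\nabla\mathcal{V}(\overline t)=-\frac12\mathbb{H}(\overline t)v$, yields $\eta=-2\partial_v\mathcal{V}(\overline t)>0$.

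\textbf{Completing your Step 4 instead.} Your Step 4 can also be finished directly, and then Steps 2--3 are unnecessary.
\begin{itemize}
\item The function $\partial_{t_v}G_{\mathcal{C}_\Omega}(\cdot,\overline t,\cdot)$ vanishes on $\partial_L\mathcal{C}_\Omega$ and, by oddness, on $(\pi_v\cap\Omega)\times(0,\infty)$. It has zero Neumann data at $y=0$.
\item Near $(\overline t,0)$ on the side $x\cdot v>0$ it is positive. The dipole term $c_{N,s}(N-2s)(x\cdot v)|(x-\overline t,y)|^{2s-N-2}$ dominates the smooth odd correction $\partial_{t_v}H_{\mathcal{C}_\Omega}=O(|x\cdot v|)$.
\item The maximum principle on the half-cylinder, with a small half-ball around $(\overline t,0)$ removed, shows it is positive over $\{x\cdot v>0\}$. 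By oddness it is negative over $\{x\cdot v<0\}$.
\end{itemize}
Now consider the $(v,v)$ entry of \eqref{grad2}, namely $\partial^2_{vv}\mathcal{R}(\overline t)=2\kappa_s\int_{\partial_L\mathcal{C}_\Omega}y^{1-2s}\,\partial_\nu G\,\partial_\nu(\partial_{t_v}G)\,(\nu\cdot v)\,d\sigma$ with $G=G_{\mathcal{C}_\Omega}(\cdot,\overline t,\cdot)$. Its integrand is nonnegative on both halves, because convexity fixes the sign of $\nu\cdot v$ on each. Strict positivity only needs the classical Hopf lemma at lateral points with $y>0$, where the operator is uniformly elliptic. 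So the weighted Hopf lemma at $y=0$ that you were worried about is not needed on this route.
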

\begin{corollary}\label{coro2}
Assume that $\Omega$ is a smooth bounded convex domain invariant 
under the reflection about the hyperplanes $\pi_{v_i}=\{x\in\mathbb{R}^N:x\cdot v_i=0\}$ for a linearly independent set $\{v_1,\ldots,v_N\}$. Then, the origin is a non degenerate critical point of the Robin function $\mathcal{R}(x)$.
\end{corollary}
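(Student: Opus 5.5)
We derive Corollary~\ref{coro2} from Theorem~\ref{ThF}, applying it once for each reflection $\mathcal{O}_i(x)=x-2(x\cdot v_i)v_i$, where $v_i$ is normalized to be unitary.

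\textbf{Step 1: the reflections fit the last part of Theorem~\ref{ThF}.} The matrix of $\mathcal{O}_i$ is $O_i=\text{I}_N-2v_iv_i^\top$, which is orthogonal. Its fixed eigenspace is $\mathcal{E}_{\mathcal{O}_i}=\pi_{v_i}$, so $\dim\mathcal{E}_{\mathcal{O}_i}=N-1$ and $\mathcal{E}_{\mathcal{O}_i}^\perp=\text{span}\{v_i\}$. Moreover $0\in\mathcal{F}_{\mathcal{O}_i}$ and $0\in\pi_{v_i}\cap\Omega$, since $\Omega$ is invariant and convex (indeed $0$ is the midpoint of $x$ and $O_ix$ only when $x\in\pi_{v_i}$, but $\Omega\neq\emptyset$ and for any $x\in\Omega$ the midpoint of $x$ and $O_ix$ lies in $\pi_{v_i}\cap\Omega$). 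The origin itself is fixed by every $\mathcal{O}_i$; to see that it lies in $\Omega$, we argue as follows. The group generated by the reflections fixes only $\bigcap_i\pi_{v_i}=\{0\}$, because the $v_i$ are linearly independent. The barycenter of $\Omega$ is fixed by each $\mathcal{O}_i$, since $\mathcal{O}_i$ is linear and preserves $\Omega$ and Lebesgue measure. Hence the barycenter is $0$, and by convexity it belongs to $\Omega$.

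\textbf{Step 2: the origin is a critical point.} Theorem~\ref{ThF} gives $\partial_{v_i}\mathcal{R}(0)=\nabla\mathcal{R}(0)\cdot v_i=0$ for every $i$. Since $\{v_1,\ldots,v_N\}$ is a basis of $\mathbb{R}^N$, it follows that $\nabla\mathcal{R}(0)=0$.

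\textbf{Step 3: non-degeneracy.} By \eqref{pisto2}, for each $i$ there is $\eta_i>0$ with $\mathbb{H}(0)v_i=\eta_iv_i$. Thus $\mathbb{H}(0)$ has the $N$ linearly independent eigenvectors $v_1,\dots,v_N$, all with positive eigenvalues. In the basis $\{v_i\}$ it is therefore the diagonal matrix $\text{diag}(\eta_1,\dots,\eta_N)$, so $\det\mathbb{H}(0)=\prod_i\eta_i>0$. Hence $\mathbb{H}(0)$ is invertible, and the origin is non-degenerate; indeed, since the Hessian is symmetric with positive spectrum, it is a strict local minimum.

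\textbf{Main obstacle.} Given Theorem~\ref{ThF}, the argument is essentially linear algebra. The only delicate point is the positivity $\eta>0$ in \eqref{pisto2}, which is taken from the theorem. Without that positivity we would only know that each $v_i$ is an eigenvector, which would not rule out a zero eigenvalue. The membership $0\in\Omega$ is the other hypothesis to check, and it is handled by the barycenter argument in Step 1.
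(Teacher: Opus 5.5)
Your proof is correct and follows the paper's route: apply the last part of Theorem~\ref{ThF} to each reflection to get $\partial_{v_i}\mathcal{R}(0)=0$ and $\mathbb{H}(0)v_i=\eta_iv_i$ with $\eta_i>0$, then conclude by linear algebra. The paper phrases criticality via Lemma~\ref{lemmaodd} and $\mathcal{U}(0,0,0)\in\mathcal{E}_{\cap}=\{0\}$, which is equivalent to your basis argument. Your barycenter argument for $0\in\Omega$ fills a point the paper takes for granted, but drop the parenthetical in Step~1, which is false as written: the midpoint of $x$ and $O_ix$ is the projection of $x$ onto $\pi_{v_i}$, and it vanishes exactly when $x\in\mathrm{span}\{v_i\}$.
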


The Robin function is a key element in the study of nonlinear critical elliptic problems. For instance, in the local setting $s=1$, existence and uniqueness issues for some critical problems strongly relies on the non degeneracy of critical points of the Robin function (cf.\cite{Bahri1995,Pistoia2004,Rey1990}). Moreover, positive solutions of nearly critical problems, $-\Delta u=u^p$, concentrate at exactly one point as $p\to 2^*=\frac{2N}{N-2}$ and this point is precisely a critical point of the Robin function (cf. \cite{Han1991,Rey1989} see also \cite{Rey1990}). Similar concentration results were obtained in \cite{Choi2014} for elliptic problems driven by the fractional Laplacian $s\in(0,1)$. On this point, some important properties of the Robin function have been proved in the last decades. For the local case, it is known (cf. \cite{Caffarelli1985}) that the Robin function for smooth bounded and convex domains $\Omega\subset\mathbb{R}^2$ has a unique critical point which is non degenerate. Assuming that $\Omega\subset\mathbb{R}^N$, $N\geq2$, is a smooth bounded convex domain which is symmetric with respect to the origin then (cf. \cite{Grossi2002}) the origin is a non-degenerate critical point of the Robin function. More general domains are considered in \cite{Micheletti2014}, where the authors prove the non-degeneracy of the critical points of the Robin function for small smooth deformations of domains $\Omega\subset\mathbb{R}^N$, $N\geq2$, of class $C^k$ with $k\geq4$. Regarding the fractional setting, in \cite{Ortega2023} it is proved the non-degeneracy of the Robin function at the origin by adapting the ideas of \cite{Grossi2002}. 
%
%
\section{Functional setting}\label{functionalsetting}
The definition of powers of the positive Laplace operator $-\Delta$, in a bounded domain $\Omega$ with homogeneous Dirichlet boundary data, is carried out via the spectral decomposition, using the powers of the eigenvalues of $-\Delta$ with the same boundary condition. In particular, let $(\varphi_i,\lambda_i)$ be the eigenfunctions (normalized with respect to the $L^2(\Omega)$-norm) and eigenvalues of $-\Delta$ endowed with homogeneous Dirichlet boundary data, then $(\varphi_i,\lambda_i^s)$ are the eigenfunctions and eigenvalues of $(-\Delta)^s$ with the same boundary conditions. Tehe fractional operator $(-\Delta)^s$ is well defined in the space 
\begin{equation*}
H_0^s(\Omega)=\left\{u=\sum_{j\geq 1} a_j\varphi_j\in L^2(\Omega),\ u|_{\partial\Omega}=0:\ \|u\|_{H_0^s(\Omega)}^2= \sum_{j\geq 1}
a_j^2\lambda_j^s<\infty \right\}.
\end{equation*}
As a direct consequence of the previous definition we get
\begin{equation*}
(-\Delta)^su=\sum_{j\geq 1} a_j\lambda_j^s\varphi_j\qquad\text{and}\qquad \|u\|_{H_0^s(\Omega)}=\|(-\Delta)^{\frac{s}{2}}u\|_{L^2(\Omega)}.
\end{equation*}
This definition of the fractional powers of the Laplace operator allows us to integrate by parts in the appropriate functional space. Hence, a natural definition of weak solution to the problem 
\begin{equation}\label{probgen}
        \left\{
        \begin{tabular}{rl}
        $(-\Delta)^su=f$ &in $\Omega$, \\
        $u=0$ &on $\partial\Omega$,
        \end{tabular}
        \right.
	\tag{$P_f$}
\end{equation}
is the following.
\begin{definition}
We say that $u\in H_0^s(\Omega)$ is a weak solution to problem \eqref{probgen} if
\begin{equation*}
\int_{\Omega}(-\Delta)^{s/2}u(-\Delta)^{s/2}\psi dx=\int_{\Omega}f\psi dx,\ \ \text{for all}\ \psi\in H_0^s(\Omega).
\end{equation*}
\end{definition}
Due to the nonlocal nature of the operator $(-\Delta)^s$ it is difficult or not possible to obtain explicit expressions of the action of $(-\Delta)^s$ on particular functions. We make then use of the $s$-harmonic extension introduced by Caffarelli and Silvestre (cf. \cite{Caffarelli2007}, see also \cite{Cabre2010, Braendle2013, Capella2011, Stinga2010}) which provides us with an equivalent definition of $(-\Delta)^s$ by means of an auxiliary local problem. In particular, associated with the domain $\Omega$, we consider the extension cylinder $\mathcal{C}_{\Omega}=\Omega\times(0,\infty)\subset\mathbb{R}_+^{N+1}$. We denote by $(x,y)$ points that belongs to $\mathcal{C}_{\Omega}$ and by $\partial_L\Omega=\partial\Omega\times(0,\infty)$ the lateral boundary of the extension cylinder. Note that, by its very construction, if $\nu(x)$ denotes the outward normal at $x\in\partial\Omega$, then $\nu^*(x,y)=(\nu(x),0)$ is the outward normal at $(x,y)\in\partial_L\mathcal{C}_{\Omega}$. Given $u\in H_0^s(\Omega)$, we define its $s$-extension $w=E[u](x,y)$ as the solution to

\begin{equation*}
        \left\{
        \begin{tabular}{rl}
        $-div(y^{1-2s}\nabla w(x,y))=0\mkern+26mu$  &in $\mathcal{C}_{\Omega}$, \\
        $w(x,y)=0\mkern+26mu$   &on $\partial_L\mathcal{C}_{\Omega}$, \\
        $w(x,0)=u(x)$  & in $\Omega\times\{y=0\}$.
        \end{tabular}
        \right.
\end{equation*}
The extension function belongs to the space $\mathcal{X}_0^s(\mathcal{C}_{\Omega})$ defined as the completion of $\mathcal{C}_{0}^{\infty}\left(\Omega\times[0,+\infty)\right)$ under the norm
\begin{equation*}
\|z\|_{\mathcal{X}_0^s(\mathcal{C}_{\Omega})}^2=\kappa_s\int_{\mathcal{C}_{\Omega}}y^{1-2s}|\nabla
z(x,y)|^2dxdy.
\end{equation*}
With the constant $\kappa_s=2^{2s-1}\frac{\Gamma(s)}{\Gamma(1-s)}$, (cf. \cite{Braendle2013}), the extension operator
 $E:H_0^s(\Omega)\mapsto \mathcal{X}_0^s(\mathcal{C}_{\Omega})$ is an isometry, i.e.,
\begin{equation*}
\|E[\varphi]\|_{\mathcal{X}_0^s(\mathcal{C}_{\Omega})}=\|\varphi\|_{H_0^s(\Omega)}\qquad
\text{for all}\ \varphi\in H_0^s(\Omega).
\end{equation*}
Moreover, as it is proved in \cite[Proposition 2.1]{Capella2011}, we have 
\begin{equation*}
H_0^s(\Omega)=\left\{ u=tr{\big |}_{\Omega\times\{0\}}U:\ U\in \mathcal{X}_0^s(\mathcal{C}_{\Omega})\right\},
\end{equation*}
with $tr{\big |}_{\Omega\times\{0\}} U=U(x,0)$ the trace of $U(x,y)$ on $\Omega\times\{y=0\}$. The key point of the $s$-extension function is the relation
\[
\frac{\partial w}{\partial \nu^s}:= -\kappa_s \lim_{y\to 0^+} y^{1-2s}\frac{\partial w}{\partial y}=(-\Delta)^su(x).
\]
Therefore, we can reformulate the problem \eqref{probgen} in terms of the extension problem as follows
\begin{equation}\label{extension_problem}
        \left\{
        \begin{tabular}{rl}
        $-div(y^{1-2s}\nabla w)=0$  &in $\mathcal{C}_{\Omega}$,\vspace{0.1cm} \\ 
        $w=0$   &on $\partial_L\mathcal{C}_{\Omega}$, \\
        $\displaystyle\frac{\partial w}{\partial \nu^s}=f$  & in $\Omega\times\{y=0\}$.
        \end{tabular}
        \right.
        \tag{$P_f^*$}
\end{equation}
A weak solution to \eqref{extension_problem} is a function $w\in \mathcal{X}_0^s(\mathcal{C}_{\Omega})$ such that
\begin{equation*}
\kappa_s\int_{\mathcal{C}_{\Omega}} y^{1-2s}\nabla w\cdot\nabla\varphi dxdy=\int_{\Omega} f\varphi(x,0)dx\qquad \forall \varphi\in\mathcal{X}_0^s(\mathcal{C}_{\Omega}).
\end{equation*}
Hence, if $w\in\mathcal{X}_0^s(\mathcal{C}_{\Omega})$ is a solution to \eqref{extension_problem} then
$u(x)=w(x,0)\in H_0^s(\Omega)$ and it is a solution to \eqref{probgen} and vice-versa, if $u\in H_0^s(\Omega)$ is a solution to \eqref{probgen} then $w=E[u]\in \mathcal{X}_0^s(\mathcal{C}_{\Omega})$ is a solution to \eqref{extension_problem}. Thus, both
formulations are equivalent.

\vspace{0.25cm}

 Let $G_{\Omega}(x,t)$ be the Green function centered at $t\in\Omega$ of $(-\Delta)^s$, that is,
the solution to
\begin{equation*}
        \left\{
        \begin{tabular}{rl}
        $(-\Delta)^sG_{\Omega}(x,t)=\delta_{t}$  &in $\Omega$, \\
        $G_{\Omega}(x,t)=0\mkern+5mu$  & on $\partial\Omega$.
        \end{tabular}
        \right.
\end{equation*}
The Green function $G_{\Omega}(x,t)$ is understood as the trace on $\{y=0\}$ of $G_{\mathcal{C}_{\Omega}}(x,t,y)$, the Green function of the extension problem (cf. \cite{Stinga2010}), 
\begin{equation}\label{extension_problem_green}
        \left\{
        \begin{tabular}{rl}
        $-div(y^{1-2s}\nabla_{(x,y)} G_{\mathcal{C}_{\Omega}})=0\mkern+5.5mu$  &in $\mathcal{C}_{\Omega}$,\vspace{0.1cm} \\
        $G_{\mathcal{C}_{\Omega}}=0\mkern+5.5mu$  &on $\partial_L\mathcal{C}_{\Omega}$, \\
        $\displaystyle\frac{\partial }{\partial \nu^s}G_{\mathcal{C}_{\Omega}}=\delta_{t}$  & in $\Omega\times\{y=0\}$.
        \end{tabular}
        \right.
\end{equation}
Regarding the fundamental solution of the spectral fractional Laplacian in the whole space we have (cf. \cite{Choi2014})
\[G_{\mathbb{R}^{N+1}}(x,t,y)=\frac{c_{N,s}}{|(x-t,y)|^{N-2s}},\] 
so that $\displaystyle G_{\mathbb{R}^N}(x,t)=\frac{c_{N,s}}{|x-t|^{N-2s}}$. 
Finally, the Green function for the extension problem \eqref{extension_problem_green} can be written as
\begin{equation*}
G_{\mathcal{C}_{\Omega}}(x,t,y)=G_{\mathbb{R}^{N+1}}(x,t,y)-H_{\mathcal{C}_{\Omega}}(x,t,y),
\end{equation*}
so that $H_{\Omega}(x,t)=H_{\mathcal{C}_{\Omega}}(x,t,0)$. Moreover, by \cite[Lemma 2.4]{Choi2014}, the function $t\mapsto H_{\mathcal{C}_{\Omega}}(x,t,y)$ is $C^\infty$.


\section{Proof of main results}\label{proofsresults}
In order to work in the extension cylinder, let us also set $\Upsilon(x,y)=\big(\mathcal{O}(x),y\big)$. The map $\Upsilon(x,y)$ is also an orthogonal transformation, namely, for a $(N+1)\times(N+1)$ orthogonal matrix $\Pi$ we have
\begin{equation}\label{matino}
\Upsilon(x,y)=\Pi\,(x,y)^\top.
\end{equation}
Clearly, if $\Omega$ is invariant under the action of $\mathcal{O}$ then the extension cylinder $\mathcal{C}_{\Omega}$ is invariant under the action of $\Upsilon$.
\begin{lemma}\label{lemmasymetric} Let $\Omega$ be a smooth domain invariant under the action of an orthogonal transformation $\mathcal{O}$. Then,
\begin{equation*}
G_{\mathcal{C}_{\Omega}}\big(\mathcal{O}(x),\mathcal{O}(t),y\big)=G_{\mathcal{C}_{\Omega}}(x,t,y).
\end{equation*}
In particular, for $\overline{t}\in\mathcal{F}_{\mathcal{O}}$, we have
\begin{equation*}
G_{\mathcal{C}_{\Omega}}\big(\mathcal{O}(x),\overline{t},y\big)=G_{\mathcal{C}_{\Omega}}(x,\overline{t},y).
\end{equation*}
\end{lemma}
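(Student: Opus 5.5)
The plan is to prove the invariance by uniqueness of the Green function of the extension problem \eqref{extension_problem_green}. Fix $t\in\Omega$ and define
\[
\widetilde G(x,y):=G_{\mathcal{C}_{\Omega}}\big(\mathcal{O}(x),\mathcal{O}(t),y\big)=G_{\mathcal{C}_{\Omega}}\big(\Upsilon(x,y),\mathcal{O}(t)\big),
\]
with the obvious abuse of notation. I will check that $\widetilde G$ solves the same problem as $G_{\mathcal{C}_{\Omega}}(\cdot,t,\cdot)$, namely \eqref{extension_problem_green} with pole at $t$. Uniqueness then gives $\widetilde G=G_{\mathcal{C}_{\Omega}}(\cdot,t,\cdot)$. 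The second statement is immediate from the first, since $\mathcal{O}(\overline{t})=\overline{t}$ for $\overline t\in\mathcal F_{\mathcal O}$.

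The verification has three parts.

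\textbf{The equation.} Since $\Upsilon=\Pi$ is orthogonal, as in \eqref{matino}, and leaves the $y$-coordinate fixed, the weight $y^{1-2s}$ is invariant. Moreover $\nabla(w\circ\Pi)=\Pi^\top(\nabla w)\circ\Pi$ and $\mathrm{div}$ transforms accordingly, so $-\mathrm{div}(y^{1-2s}\nabla\,\cdot)$ commutes with composition by $\Upsilon$. Hence $\widetilde G$ solves the equation in $\mathcal{C}_{\Omega}$, using $\Upsilon(\mathcal{C}_\Omega)=\mathcal{C}_\Omega$.

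\textbf{The lateral boundary.} $\mathcal{O}$ maps $\partial\Omega$ onto itself, so $\widetilde G=0$ on $\partial_L\mathcal{C}_\Omega$.

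\textbf{The Neumann condition.} Because $\Upsilon$ does not act on $y$, we have $y^{1-2s}\partial_y\widetilde G(x,y)=\big(y^{1-2s}\partial_yG_{\mathcal{C}_\Omega}\big)(\mathcal{O}(x),\mathcal{O}(t),y)$. Its limit as $y\to0^+$ is therefore $\delta_{\mathcal{O}(t)}\circ\mathcal{O}$. Since $|\det O|=1$, this equals $\delta_t$, because for a test function $\varphi$ the change of variables gives $\int\delta_{\mathcal O(t)}(\mathcal O x)\varphi(x)\,dx=\varphi(\mathcal O^{-1}\mathcal O t)=\varphi(t)$.

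The most rigorous way to organize this is through the weak formulation, with $\widetilde G$ tested against $\varphi\in\mathcal{X}_0^s(\mathcal{C}_\Omega)$. Substitute $(x,y)=\Upsilon^{-1}(x',y)$ and note that $\varphi\circ\Upsilon^{-1}\in\mathcal{X}_0^s(\mathcal C_\Omega)$ with the same norm. This gives $\kappa_s\int y^{1-2s}\nabla\widetilde G\cdot\nabla\varphi=\varphi(t,0)$, which is the defining identity for the Green function with pole $t$.

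The main obstacle is the uniqueness step, since the Green function is not in $\mathcal{X}_0^s$ and the weak formulation is only formal. I would handle it by passing to the regular part. Because $|\mathcal{O}(x)-\mathcal{O}(t)|=|x-t|$, the singular part $G_{\mathbb{R}^{N+1}}$ is exactly invariant. So the difference $D=\widetilde G-G_{\mathcal{C}_\Omega}$ equals $H_{\mathcal{C}_\Omega}(x,t,y)-H_{\mathcal{C}_\Omega}(\mathcal{O}x,\mathcal{O}t,y)$. This function is smooth and has finite weighted energy, it solves the homogeneous equation, it vanishes on $\partial_L\mathcal{C}_\Omega$, and it has zero conormal derivative on $\{y=0\}$, since the two deltas cancel there. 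Thus $D\in\mathcal{X}_0^s(\mathcal{C}_\Omega)$ is a weak solution of \eqref{extension_problem} with $f=0$. Testing with $D$ itself gives $\|D\|_{\mathcal{X}_0^s}=0$, so $D\equiv0$.

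An alternative that avoids the extension entirely is the spectral route. Each $\varphi_i\circ\mathcal{O}$ is again a Dirichlet eigenfunction with eigenvalue $\lambda_i$, so each eigenspace is $\mathcal{O}$-invariant. Consequently the kernels $\sum_i\lambda_i^{-s}\varphi_i(x)\varphi_i(t)$, and the analogous heat-semigroup representation of $G_{\mathcal{C}_\Omega}$ as in \cite{Stinga2010}, are invariant under $(x,t)\mapsto(\mathcal{O}x,\mathcal{O}t)$. I would use this as a cross-check.
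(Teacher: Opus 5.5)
Your proposal is correct and follows the paper's route. You transport the weak formulation of \eqref{extension_problem_green} by the orthogonal map $\Upsilon$, which preserves the weight $y^{1-2s}$, the cylinder and its lateral boundary, to see that $G_{\mathcal{C}_{\Omega}}(\mathcal{O}(\cdot),\mathcal{O}(t),\cdot)$ satisfies the defining identity with pole $t$, and then conclude by uniqueness. Your uniqueness step through the regular part, checking that $D=H_{\mathcal{C}_{\Omega}}(x,t,y)-H_{\mathcal{C}_{\Omega}}(\mathcal{O}(x),\mathcal{O}(t),y)$ lies in $\mathcal{X}_0^s(\mathcal{C}_{\Omega})$ and testing with $D$ itself, makes explicit what the paper leaves implicit: the paper tests the difference of two Green functions without noting that they do not have finite weighted energy.
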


\begin{proof}
By definition of the Green function centered at $\tau\in\Omega$ it follows that, for any  $\varphi\in C_0^{\infty}(\mathcal{C}_{\Omega})$, 
\begin{equation}\label{rev_eq1}
\kappa_s\int_{\mathcal{C}_{\Omega}}y^{1-2s}\nabla_{(x,y)} G_{\mathcal{C}_{\Omega}}(x,t,y)\cdot \nabla_{(x,y)}\varphi(x,y)dydx=\int_{\Omega}\delta_{t}\,\varphi(x,0)dx=\varphi(t,0).
\end{equation}
On the other hand, the change of variables $(z,y)=\Upsilon(x,y)$ produces
{\small \[
\begin{split}
\kappa_s\!\int_{\mathcal{C}_{\Omega}}y^{1-2s}\nabla_{(x,y)} \left(G_{\mathcal{C}_{\Omega}}\big(\mathcal{O}(x),\mathcal{O}(t),y\big)\right)\cdot\nabla_{(x,y)}\varphi(x,y)dydx=&\kappa_s\!\int_{\mathcal{C}_{\Omega}}y^{1-2s}\nabla_{(z,y)} G_{\mathcal{C}_{\Omega}}\big(z,\mathcal{O}(t),y\big)\cdot\nabla_{(z,y)}\left(\varphi\big(\mathcal{O}^{-1}(z),y\big)\right)dydz\\
=&\int_{\Omega}\delta_{\mathcal{O}(t)}\,\varphi\big(\mathcal{O}^{-1}(z),0\big)dz\\
=&\varphi(t,0),
\end{split}
\]}
since $\Upsilon$ is an orthogonal transformation so the inner product is preserved and $|det(\Pi^{-1})|=1$. Indeed,
\begin{equation*}
\nabla_{(x,y)} \left(G_{\mathcal{C}_{\Omega}}\big(\mathcal{O}(x),\mathcal{O}(t),y\big)\right)=\Pi^\top \nabla_{(z,y)} G_{\mathcal{C}_{\Omega}}\big(z,\mathcal{O}(t),y\big)\qquad\text{and}\qquad
\nabla_{(x,y)}^\top\varphi(x,y)=\Pi^\top\nabla_{(z,y)} \left(\varphi\big(\mathcal{O}^{-1}(z),y\big)\right),
\end{equation*}
where $\nabla_{(x,y)}^\top$ denotes the transpose of $\nabla_{(x,y)}$ and  $\Pi$ is the matrix \eqref{matino}. Thus, 
\begin{equation*}
\begin{split}
\nabla_{(x,y)} \left(G_{\mathcal{C}_{\Omega}}\big(\mathcal{O}(x),\mathcal{O}(t),y\big)\right)\cdot\nabla_{(x,y)}\varphi(x,y)dydx&=\nabla_{(z,y)}^\top G_{\mathcal{C}_{\Omega}}\big(z,\mathcal{O}(t),y\big)\Pi \Pi^\top \nabla_{(z,y)} \left(\varphi\big(\mathcal{O}^{-1}(z),y\big)\right)\\
&=\nabla_{(z,y)} G_{\mathcal{C}_{\Omega}}\big(z,\mathcal{O}(t),y\big)\cdot\nabla_{(z,y)}\left(\varphi\big(\mathcal{O}^{-1}(z),y\big)\right).
\end{split}
\end{equation*}
Therefore, by \eqref{rev_eq1}, we get that, for all $\varphi\in C_0^{\infty}(\mathcal{C}_{\Omega})$, 
\begin{equation}\label{bs}
\int_{\mathcal{C}_{\Omega}}y^{1-2s}\left(\nabla_{(x,y)} \left(G_{\mathcal{C}_{\Omega}}\big(\mathcal{O}(x),\mathcal{O}(t),y\big)\right)-\nabla_{(x,y)} G_{\mathcal{C}_{\Omega}}(x,t,y)\right)\cdot\nabla\varphi(x,y)dydx=0.
\end{equation}
By the invariance of $\mathcal{C}_\Omega$ under the action of $\Upsilon$, both $G_{\mathcal{C}_{\Omega}}\big(\mathcal{O}(x),\mathcal{O}(t),y\big)$ and $G_{\mathcal{C}_{\Omega}}(x,t,y)$ take the same value $0$ on $\partial_L\mathcal{C}_{\Omega}$. This, together with \eqref{bs}, implies $G_{\mathcal{C}_{\Omega}}\big(\mathcal{O}(x),\mathcal{O}(t),y\big)=G_{\mathcal{C}_{\Omega}}(x,t,y)$ a.e. in $\mathcal{C}_{\Omega}$.
By continuity of the Green function for the extension problem (except at the singular point $(t,0)\in\Omega$), the identity \eqref{bs} holds everywhere in $\mathcal{C}_{\Omega}$. 
\end{proof}
\begin{remark}\label{remark_uno}
Lemma \ref{lemmasymetric} above can be equivalently restated as $G_{\mathcal{C}_{\Omega}}\big(\mathcal{O}^{-1}(x),t,y\big)=G_{\mathcal{C}_{\Omega}}(x,\mathcal{O}(t),y)$.
\end{remark}
Being $\mathcal{O}$ an orthogonal map, $|\mathcal{O}(x)-\mathcal{O}(t)|\!=\!|x-t|$ so $G_{\mathbb{R}^{N+1}}\big(\mathcal{O}(x),\mathcal{O}(t),y\big)=G_{\mathbb{R}^{N+1}}(x,t,y)$. Thus, by Lemma \ref{lemmasymetric},
\begin{equation}\label{prepa}
H_{\mathcal{C}_{\Omega}}(\mathcal{O}(x),\mathcal{O}(t),y)=H_{\mathcal{C}_{\Omega}}(x,t,y)
\end{equation} 
and, as a consequence, $H_{\Omega}(\mathcal{O}(x),\mathcal{O}(t))=H_{\Omega}(x,t)$ so that
\begin{equation}\label{padre}
\mathcal{R}(\mathcal{O}(x))=\mathcal{R}(x).
\end{equation}
The former equality entails the proof of \eqref{pisto0} by noticing that the chain rule produces 
\begin{equation}\label{pd2}
O^\top \nabla \mathcal{R}(\mathcal{O}(x))=\nabla \mathcal{R}(x).
\end{equation} 
Therefore, $O\nabla \mathcal{R}(x)= \nabla \mathcal{R}(\mathcal{O}(x))$ so taking $\overline{t}\in\mathcal{F}_{\mathcal{O}}$ we get $O \nabla \mathcal{R}(\overline{t})=\nabla \mathcal{R}(\overline{t})$. Thus, 
\begin{equation}\label{easy}
\nabla \mathcal{R}(\overline{t})\in\mathcal{E}_{\mathcal{O}}\quad\text{for}\ \overline{t}\in\mathcal{F}_{\mathcal{O}}.
\end{equation}
Then, the chain rule jointly and \eqref{pd2} imply that the Hessian of the Robin function satisfies $O^\top \mathbb{H}(\mathcal{O}(x)) O=\mathbb{H}(x)$, so 
\begin{equation}\label{posteasy}
O^\top \mathbb{H}(\overline{t}) O=\mathbb{H}(\overline{t})\quad \text{for}\ \overline{t}\in\mathcal{F}_{\mathcal{O}}.
\end{equation}
Since by \eqref{posteasy} above the Hessian conmutes with the matrix $O$, say $\mathbb{H}(\overline{t}) O=O \mathbb{H}(\overline{t})$ for $\overline{t}\in\mathcal{F}_{\mathcal{O}}$, we find
 \begin{equation}\label{inclusiones}
 \mathbb{H}(\overline{t})[\mathcal{E}_{\mathcal{O}}]\subset \mathcal{E}_{\mathcal{O}}\qquad\text{and}\qquad \mathbb{H}(\overline{t})[\mathcal{E}_{\mathcal{O}}^\perp]\subset \mathcal{E}_{\mathcal{O}}^\perp,
 \end{equation}
 and \eqref{pisto2} follows. Actually, the orthogonal decomposition $\mathbb{R}^N=\mathcal{E}_{\mathcal{O}}\oplus\mathcal{E}_{\mathcal{O}}^\perp$ implies the block structure
 \[\mathbb{H}(\overline{t})=\begin{pmatrix}
 H_{1}&0\\
 0& H_{\perp}
 \end{pmatrix}.\]
The next examples illustrate the above results. If $\Omega$ is invariant under the action of an orthogonal transformation $\mathcal{O}$ that has no nontrivial fixed points, say, $\mathcal{F}_{\mathcal{O}}=\{0\}$, we have $\mathcal{E}_{\mathcal{O}}=\{0\}$. Then, \eqref{easy} implies $\nabla\mathcal{R}(0)=0$ so the origin is a critical point of the Robin function. If, for example, $\Omega$ is symmetric respect to $x_1$, then it is invariant under the action of $\mathcal{O}(x_1,x_2,\ldots, x_N)=(-x_1,x_2,\ldots,x_N)$, so $\mathcal{E}_{\mathcal{O}}=\{0\}\times\mathbb{R}^{N-1}$ and hence $\frac{\partial }{\partial x_1} \mathcal{R}(\overline{t})=0$ for $\overline{t}\in\Omega\cap\{x_1=0\}$. Furthermore, in this case \eqref{posteasy} implies 
\[\mathbb{H}(\overline{t})=\begin{pmatrix}
h_{11}&0\\
0& H_{\perp}
\end{pmatrix}\qquad\text{for}\ \overline{t}\in\Omega\cap\{x_1=0\}\ \text{and some}\ h_{11}\in\mathbb{R}.\]
Indeed, if $dim(\mathcal{E}_{\mathcal{O}})=N-1$ so that $\mathcal{E}_{\mathcal{O}}^\perp=span\{v\}$, by \eqref{inclusiones} it follows that 
\[\mathbb{H}(\overline{t}) v=\eta v\qquad\text{for some}\ \eta\in\mathbb{R}.\]
The main aim in what follows is then to prove that the number $\eta$ has actually a sign, say $\eta\neq0$, so the second derivative of the Robin function in the normal direction to the invariant subspace is not degenerated. The proof of this fact will follow by means of a PDE based argument which, as a byproduct, will also produce the results derived from \eqref{padre}. 

Given $t\in\Omega$, let us consider the vector function $\mathcal{U}(x,t,y)$ solution to 
\begin{equation}\label{g2}
        \left\{
        \begin{tabular}{rl}
        $-div(y^{1-2s}\nabla_{(x,y)} \mathcal{U} )=0\mkern+104mu$  &in $\mathcal{C}_{\Omega}$, \\
        $\mathcal{U}(x,t,y)=\nabla_{x}G_{\mathcal{C}_{\Omega}}(x,t,y)$  &on $\partial_L\mathcal{C}_{\Omega}$, \\
        $\frac{\partial }{\partial \nu^s}\mathcal{U}(x,t,0)=0\mkern+104mu$  & in $\Omega\times\{y=0\}$,
        \end{tabular}
        \right.			
\end{equation}
namely, $\mathcal{U}(x,t,y)=(U_1(x,t,y),\ldots,U_N(x,t,y))^\top$ with $U_j(x,t,y)$ being the solution to the problem 
\begin{equation}\label{pu1}
        \left\{
        \begin{tabular}{rl}
        $-div(y^{1-2s}\nabla_{(x,y)} U_j)=0\mkern+133mu$  &in $\mathcal{C}_{\Omega}$, \\
        $U_j(x,t,y)=e_j\cdot\nabla_x G_{\mathcal{C}_{\Omega}}(x,t,y)$  &on $\partial_L\mathcal{C}_{\Omega}$, \\
        $\frac{\partial }{\partial \nu^s}U_j(x,t,0)=0\mkern+133mu$  & in $\Omega\times\{y=0\}$.
        \end{tabular}
        \right.			
\end{equation}
\begin{lemma}\label{lemmaodd}
For all $x,t\in\Omega$ and $y\geq0$ it holds that
\[
O\,\mathcal{U}(x,t,y)=\mathcal{U}(\mathcal{O}(x),\mathcal{O}(t),y).
\]
\end{lemma}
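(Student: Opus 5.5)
We will prove the identity by uniqueness for the mixed boundary value problem \eqref{g2}. Fix $t\in\Omega$ and set
\[
\mathcal{V}(x,y):=O^\top\,\mathcal{U}\big(\mathcal{O}(x),\mathcal{O}(t),y\big).
\]
It suffices to show that $\mathcal{V}$ solves \eqref{g2}, centered at the same point $t$. Uniqueness then gives $\mathcal{V}=\mathcal{U}(\cdot,t,\cdot)$. Multiplying by $O$ and using $OO^\top=\mathrm{I}_N$ yields the claim.

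\textbf{Step 1: the equation.} Each component of $\mathcal{V}$ is a constant linear combination of the functions $(x,y)\mapsto U_k(\mathcal{O}(x),\mathcal{O}(t),y)$. The operator $-div(y^{1-2s}\nabla_{(x,y)}\,\cdot)$ commutes with composition by $\Upsilon$, because $\Upsilon$ is orthogonal and leaves the weight $y^{1-2s}$ unchanged. This can be seen either directly from the chain rule or through the weak formulation, using the change of variables $(z,y)=\Upsilon(x,y)$ as in the proof of Lemma \ref{lemmasymetric}. Hence each composed function is a weak solution of the degenerate equation in $\mathcal{C}_{\Omega}$, since $\Upsilon(\mathcal{C}_\Omega)=\mathcal{C}_\Omega$, and so is every linear combination of them.

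\textbf{Step 2: the Neumann condition at $y=0$.} $\Upsilon$ does not change the $y$ variable. Therefore $-\kappa_s\lim_{y\to0^+}y^{1-2s}\partial_y$ applied to $U_k(\mathcal{O}(x),\mathcal{O}(t),y)$ equals $\frac{\partial U_k}{\partial\nu^s}(\mathcal{O}(x),\mathcal{O}(t),0)=0$. By linearity the same holds for $\mathcal{V}$.

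\textbf{Step 3: the lateral boundary condition.} This is the key step. For $(x,y)\in\partial_L\mathcal{C}_\Omega$ we also have $(\mathcal{O}(x),y)\in\partial_L\mathcal{C}_\Omega$, so
\[
\mathcal{V}(x,y)=O^\top\nabla_z G_{\mathcal{C}_\Omega}\big(z,\mathcal{O}(t),y\big)\big|_{z=\mathcal{O}(x)}.
\]
Now differentiate the identity of Lemma \ref{lemmasymetric}, $G_{\mathcal{C}_\Omega}(\mathcal{O}(x),\mathcal{O}(t),y)=G_{\mathcal{C}_\Omega}(x,t,y)$, with respect to $x$. The chain rule gives
\[
O^\top\nabla_zG_{\mathcal{C}_\Omega}\big(z,\mathcal{O}(t),y\big)\big|_{z=\mathcal{O}(x)}=\nabla_xG_{\mathcal{C}_\Omega}(x,t,y),
\]
which is exactly the same computation that produced \eqref{pd2}. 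So $\mathcal{V}=\nabla_xG_{\mathcal{C}_\Omega}(x,t,y)$ on $\partial_L\mathcal{C}_\Omega$.

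The main point to handle with care is the justification of this differentiation up to the lateral boundary. It needs $G_{\mathcal{C}_\Omega}(\cdot,t,y)$ to be $C^1$ up to $\partial\Omega\times(0,\infty)$, away from the pole $(t,0)$, so that the Dirichlet datum in \eqref{g2} makes sense. This follows from the $C^2$ regularity of $\partial\Omega$ and elliptic regularity away from $y=0$. The identity of Lemma \ref{lemmasymetric} holds pointwise on $\overline{\mathcal{C}_\Omega}$ minus the pole, and $\mathcal{O}$ preserves $\partial\Omega$, so the chain rule computation is legitimate there.

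\textbf{Step 4: uniqueness.} We now have two solutions of \eqref{g2} with the same data. Their difference $W$ solves the homogeneous mixed problem, with $W=0$ on $\partial_L\mathcal{C}_\Omega$ and $\partial W/\partial\nu^s=0$ on $\Omega\times\{0\}$. Hence $W\in\mathcal{X}_0^s(\mathcal{C}_\Omega)$ is a weak solution with $f=0$. Testing with $W$ itself gives $\int_{\mathcal{C}_\Omega}y^{1-2s}|\nabla W|^2=0$, so $W\equiv0$. Continuity then extends the identity to $y=0$, which concludes the proof.
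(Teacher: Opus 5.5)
Your argument is correct, but it follows a different route from the paper's. The paper first identifies the solution of \eqref{g2} explicitly as $\mathcal{U}(x,t,y)=-(\nabla_x+\nabla_t)H_{\mathcal{C}_{\Omega}}(x,t,y)$. This works because $G_{\mathbb{R}^{N+1}}$ depends only on $x-t$, so $(\nabla_x+\nabla_t)G_{\mathbb{R}^{N+1}}=0$, and because $\nabla_tG_{\mathcal{C}_{\Omega}}$ vanishes on $\partial_L\mathcal{C}_{\Omega}$. The lemma is then a one-line chain-rule consequence of the invariance \eqref{prepa} of the regular part. The paper also sketches a second proof for the trace at $y=0$, carrying the symmetry through the boundary integral in \eqref{representation2}.

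You instead carry the symmetry through the boundary value problem itself, as in the proof of Lemma \ref{lemmasymetric}. The rotated field $O^\top\mathcal{U}(\mathcal{O}(x),\mathcal{O}(t),y)$ satisfies the same equation, Neumann condition and lateral data, and uniqueness closes the argument. Your route needs only three ingredients:
\begin{itemize}
\item Lemma \ref{lemmasymetric};
\item $C^1$ regularity of $G_{\mathcal{C}_{\Omega}}$ up to $\partial\Omega\times(0,\infty)$;
\item uniqueness for \eqref{g2}.
\end{itemize}
In particular it uses neither the smoothness of $H_{\mathcal{C}_{\Omega}}$ in $t$ nor the explicit formula. The paper's route is shorter once that formula is available, and the formula is needed later anyway. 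It relates $\mathcal{U}(p,p,0)$ to $\nabla\mathcal{R}(p)$, which yields the Brezis--Peletier-type identity for every $s\in(0,1)$.

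One point in your Step 4 should be made explicit. On the half-infinite cylinder, uniqueness for the mixed problem holds only in a finite-energy class. For $s=\frac12$, for example, $\varphi_1(x)\cosh(\sqrt{\lambda_1}\,y)$ is a nontrivial solution of the homogeneous problem. You should therefore state that $\mathcal{U}(\cdot,t,\cdot)$ is the finite-energy solution. Then note that composition with $\Upsilon$ preserves $\int_{\mathcal{C}_{\Omega}}y^{1-2s}|\nabla w|^2$, so $\mathcal{V}$ and hence $W$ have finite energy. With that, $W$ lies in $\mathcal{X}_0^s(\mathcal{C}_{\Omega})$ and your energy identity applies. This is a one-line addition, not a gap.
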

\begin{proof}
By the very definition of the Green function and the regularity properties of $H_{\mathcal{C}_\Omega}(x,t,y)$, it follows that the function $\mathcal{U}(x,t,y)=-(\nabla_x+\nabla_t)H_{\mathcal{C}_\Omega}(x,t,y)$ is the solution to the problem \eqref{g2}. Thus, by \eqref{prepa}, we have
\[
\begin{split}
\mathcal{U}(x,t,y)&=-(\nabla_x+\nabla_t)H_{\mathcal{C}_\Omega}(x,t,y)=-(\nabla_x+\nabla_t)H_{\mathcal{C}_{\Omega}}(\mathcal{O}(x),\mathcal{O}(t),y)=-O^\top(\nabla_z+\nabla_\tau)H_{\mathcal{C}_{\Omega}}(z,\tau,y)=O^\top\mathcal{U}(z,\tau,y)\\
&=O^\top \mathcal{U}(\mathcal{O}(x),\mathcal{O}(t),y).
\end{split}\]
Next we provide another proof of this result for the trace of $\mathcal{U}(x,t,y)$ on $\{y=0\}$ by means of the Poisson-like representation formula for the solution to \eqref{pu1}. Combining this representation formula together with the explicit expression $\mathcal{U}(x,t,y)=-(\nabla_x+\nabla_t)H_{\mathcal{C}_\Omega}(x,t,y)$ we obtain a new and very short proof of the Brezis--Peletier-like formulas for $\nabla \mathcal{R}(t)$ (cf. \cite{Brezis1989}) stated in Lemma \ref{gradrobin} below. In particular, multiplying the first equation of \eqref{pu1} by $G_{\mathcal{C}_{\Omega}}(x,\tau,y)$ and integrating by parts twice, we get the representation formula
\begin{equation}\label{representation}
U_j(\tau,t,0)=-\kappa_s\int_{\partial_L\mathcal{C}_{\Omega}} y^{1-2s}\frac{\partial}{\partial x_j}G_{\mathcal{C}_{\Omega}}(x,t,y)\frac{\partial }{\partial\nu_{(x,y)}^*}G_{\mathcal{C}_{\Omega}}(x,\tau,y)d\sigma_{(x,y)}.
\end{equation}
Equivalently, the above representation formula can be stated in terms of the vector function $\mathcal{U}$ as  
\begin{equation}\label{representation2}
\mathcal{U}(\tau,t,0)=-\kappa_s\int_{\partial_L\mathcal{C}_{\Omega}} y^{1-2s}\nabla_x G_{\mathcal{C}_{\Omega}}(x,t,y)\frac{\partial }{\partial\nu_{(x,y)}^*}G_{\mathcal{C}_{\Omega}}(x,\tau,y)d\sigma_{(x,y)}.
\end{equation}
Then, because of Lemma \ref{lemmasymetric},
\[
\mathcal{U}(\tau,t,0)=-\kappa_s\int_{\partial_L\mathcal{C}_{\Omega}} y^{1-2s}\nabla_x\left(G_{\mathcal{C}_{\Omega}}\big(\mathcal{O}(x),\mathcal{O}(t),y\big)\right)\frac{\partial }{\partial\nu_{(x,y)}^*}G_{\mathcal{C}_{\Omega}}(x,\tau,y)d\sigma_{(x,y)}.
\]
Using that $G_{\mathcal{C}_{\Omega}}(x,\tau,y)=0$ on $\partial_L\mathcal{C}_{\Omega}$ jointly with Lemma \ref{lemmasymetric} (see Remark \ref{remark_uno}), we get
\[
\begin{split}
\frac{\partial }{\partial\nu_{(x,y)}^*}G_{\mathcal{C}_{\Omega}}(x,\tau,y)&=-|\nabla_{(x,y)}G_{\mathcal{C}_{\Omega}}(x,\tau,y)|=-|\Pi^\top\nabla_{(z,y)} \left(G_{\mathcal{C}_{\Omega}}(\mathcal{O}^{-1}(z),\tau,y)\right)|
=-|\nabla_{(z,y)} G_{\mathcal{C}_{\Omega}}(z,\mathcal{O}(\tau),y)|\\[-3pt]
&=\frac{\partial }{\partial\nu_{(z,y)}^*}G_{\mathcal{C}_{\Omega}}(z,\mathcal{O}(\tau),y).
\end{split}
\]
Thus, the change of variable $z=\mathcal{O}(x)$ produces
\[
\mathcal{U}(\tau,t,0)=-\kappa_s\int_{\partial_L\mathcal{C}_{\Omega}} y^{1-2s}O^\top\nabla_{z} G_{\mathcal{C}_{\Omega}}\big(z,\mathcal{O}(t),y\big)\frac{\partial }{\partial\nu_{(z,y)}^*}G_{\mathcal{C}_{\Omega}}(z,\mathcal{O}(\tau),y)d\sigma_{(z,y)}
=O^\top\mathcal{U}(\mathcal{O}(\tau),\mathcal{O}(t),0).\]
\end{proof}
Next, fixed $\overline{t}\in\mathcal{F}_{\mathcal{O}}$ and a vector $v\in\mathbb{R}^N$, let $\mathcal{V}(x)=V(x,0)$ with $V(x,y)$ being the solution to
\begin{equation}\label{guapa}
        \left\{
        \begin{tabular}{rl}
        $-div(y^{1-2s}\nabla V)=0\mkern+104mu$  &in $\mathcal{C}_{\Omega}$, \\
        $V(x,y)=\dfrac{\partial}{\partial v}G_{\mathcal{C}_{\Omega}}(x,\overline{t},y)$  &on $\partial_L\mathcal{C}_{\Omega}$, \\
        $\frac{\partial }{\partial \nu^s}V(x,0)=0\mkern+104mu$  & in $\Omega\times\{y=0\}$.
        \end{tabular}
        \right.			
\end{equation} 
\begin{lemma}\label{lemmaguapo}
If $v\in\mathcal{E}_{\mathcal{O}}^\perp$ then $\mathcal{V}(t)=0$ for ${t}\in\mathcal{F}_{\mathcal{O}}$. As a consequence, 
\begin{equation}\label{dht}
\nabla \mathcal{V}(x)\big|_{x={t}}\in\mathcal{E}_{\mathcal{O}}^\perp,\quad\text{for}\ {t}\in\mathcal{F}_{\mathcal{O}}.
\end{equation}
Moreover, if $dim(\mathcal{E}_{\mathcal{O}})=N-1$, then
\vspace{-0.2cm}
\begin{equation}\label{tabaco}
\frac{\partial}{\partial v}\mathcal{V}(x)\Big|_{x={t}}<0,\quad\text{for}\ t\in\mathcal{F}_{\mathcal{O}}.
\end{equation}
\end{lemma}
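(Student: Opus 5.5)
The plan is to reduce everything to how $V$ transforms under $\Upsilon$, and then to get the sign \eqref{tabaco} from a maximum principle on a half cylinder. Write $V_v$ for the solution of \eqref{guapa} with direction $v$, so that $V_v=v\cdot\mathcal{U}(\cdot,\overline{t},\cdot)$ and $v\mapsto V_v$ is linear.

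\emph{Transformation rule.} By Lemma \ref{lemmasymetric}, $G_{\mathcal{C}_{\Omega}}(\mathcal{O}(x),\overline{t},y)=G_{\mathcal{C}_{\Omega}}(x,\overline{t},y)$. Differentiating gives $\nabla_xG_{\mathcal{C}_{\Omega}}(\mathcal{O}(x),\overline{t},y)=O\nabla_xG_{\mathcal{C}_{\Omega}}(x,\overline{t},y)$. Hence the boundary datum of $V_v(\Upsilon(x,y))$ equals $(O^\top v)\cdot\nabla_xG_{\mathcal{C}_{\Omega}}(x,\overline{t},y)$. The operator, the Neumann condition on $\{y=0\}$ and the cylinder are all $\Upsilon$-invariant. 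So uniqueness for \eqref{guapa} (equivalently, Lemma \ref{lemmaodd} with $t=\overline{t}$) yields
\[
V_v(\mathcal{O}(x),y)=V_{O^\top v}(x,y).
\]

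\emph{First two claims.} Fix $t\in\mathcal{F}_{\mathcal{O}}$. The linear functional $L(w)=V_w(t,0)$ then satisfies $L(w)=L(O^\top w)$. Its representing vector $a$ therefore satisfies $Oa=a$, i.e. $a\in\mathcal{E}_{\mathcal{O}}$, so $L$ vanishes on $\mathcal{E}_{\mathcal{O}}^\perp$. This is $\mathcal{V}(t)=0$.

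For \eqref{dht}, differentiate the transformation rule in $x$ and evaluate at $x=t$. This gives $O^\top\nabla V_v(t)=\nabla V_{O^\top v}(t)$. Set $B(v,w)=w\cdot\nabla V_v(t)$. Then $B(O^\top v,w)=B(v,Ow)$, and for $w\in\mathcal{E}_{\mathcal{O}}$ this reads $B(O^\top v,w)=B(v,w)$. Hence $v\mapsto B(v,w)$ is again an $O$-invariant functional and vanishes for $v\in\mathcal{E}_{\mathcal{O}}^\perp$. So $\nabla\mathcal{V}(t)\perp\mathcal{E}_{\mathcal{O}}$, which is \eqref{dht}.

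\emph{Sign when $\dim\mathcal{E}_{\mathcal{O}}=N-1$.} Since $O$ is orthogonal and fixes $v^\perp$, it is the reflection through $\pi_v$ (the case $O=\text{I}_N$ is excluded because $\mathcal{E}_{\mathcal{O}}^\perp\neq\{0\}$). Thus $O^\top v=-v$, so $V$ is odd with respect to $\pi_v$ and vanishes on $(\pi_v\cap\Omega)\times[0,\infty)$. Work on the half cylinder $\mathcal{C}^+=\{x\in\Omega:x\cdot v>0\}\times(0,\infty)$.

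On its lateral part, $G_{\mathcal{C}_{\Omega}}\ge0$ vanishes on $\partial\Omega$, so $\partial_vG=-|\nabla_xG|\,(\nu\cdot v)$. Take $x\in\partial\Omega$ with $x\cdot v>0$. By convexity and symmetry the segment from $x$ to its reflection lies in $\overline\Omega$, so $-v$ points into $\overline\Omega$ and $\nu(x)\cdot v\ge0$. Hence $V\le0$ there. Hopf's lemma on $\partial_L\mathcal{C}_{\Omega}$ gives $|\nabla_xG|>0$, and $\nu\cdot v>0$ on a nonempty open set, so $V\not\equiv0$ on the boundary. The weak maximum principle, with zero Neumann data on $\{y=0\}$, then gives $V<0$ in $\mathcal{C}^+$, including at points of $\{y=0\}$ with $x\cdot v>0$ (strong maximum principle after even reflection in $y$). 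The point $(t,0)$ lies on $\pi_v$, where $V=0$.

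\emph{Main obstacle.} It remains to conclude $\partial_vV(t,0)<0$, a Hopf lemma at a boundary point that sits on the edge $\{y=0\}$. I would reflect $V$ evenly in $y$. The zero weighted Neumann condition makes the reflection a weak solution of $-\mathrm{div}(|y|^{1-2s}\nabla V)=0$ across $\{y=0\}$, with $|y|^{1-2s}$ an $A_2$ weight, and $(t,0)$ becomes an interior point of the flat boundary piece $\pi_v$ of the doubled half domain. I would then apply a Hopf lemma for degenerate elliptic operators with $A_2$ weights. Alternatively, I would build a barrier by hand that is explicit in $(x\cdot v,y)$, comparing $V$ with $-\varepsilon (x\cdot v)$ on a small half ball around $(t,0)$; note $x\cdot v$ is itself a solution with zero Neumann data. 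Justifying this weighted Hopf step rigorously is where I expect the real difficulty.
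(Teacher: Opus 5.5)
Up to the last step your argument is correct and is essentially the paper's. Your functional $L$ is represented by $\mathcal{U}(t,\overline{t},0)$, which the paper places in $\mathcal{E}_{\mathcal{O}}$ via Lemma \ref{lemmaodd}. As in the paper, the sign then comes from the maximum principle on a half cylinder followed by a Hopf lemma at $(t,0)$.

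Your bilinear-form proof of \eqref{dht} is valid but longer than needed. Since $\mathcal{V}\equiv0$ on $\mathcal{F}_{\mathcal{O}}=\mathcal{E}_{\mathcal{O}}\cap\Omega$, which is relatively open in $\mathcal{E}_{\mathcal{O}}$, the derivatives of $\mathcal{V}$ along $\mathcal{E}_{\mathcal{O}}$ vanish there. On the other hand, two of your steps make explicit what the paper uses silently. The oddness of $V$ on the whole strip $(\pi_v\cap\Omega)\times[0,\infty)$ is needed because \eqref{celo} is only stated at $y=0$. Your check that $V\not\equiv0$ on the lateral boundary is needed for the maximum principle to give a strict sign.

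The gap is the final inequality \eqref{tabaco}, which you leave open. The paper settles it by quoting a Hopf lemma for the extension operator, \cite[Lemma 2.7]{Capella2011}; your first option amounts to the same move.

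Your second option does not work as stated. Comparing $V$ with $-\varepsilon(x\cdot v)$ on $\{|(x-t,y)|<r,\ x\cdot v>0\}$ requires $V\leq-\varepsilon(x\cdot v)$ on the spherical part of the boundary. Near the points $(x_0,0)$ of that sphere with $x_0\in\pi_v$, this is a uniform Hopf-type bound at neighbouring edge points, i.e.\ essentially what you want to prove.

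The classical annular barrier does work, because its centre can be placed on $\{y=0\}$. After the even reflection, write $z=(x,y)$ and take $c=(t+Rv,0)$, with $R$ so small that $B_R(c)$ lies in the doubled half cylinder and touches $\pi_v\times\mathbb{R}$ only at $(t,0)$. Set $h(z)=e^{-\alpha|z-c|^2}-e^{-\alpha R^2}$ and $a=1-2s$. Then, once $\alpha$ is large,
\[
\mathrm{div}\big(|y|^{a}\nabla h\big)=|y|^{a}e^{-\alpha|z-c|^2}\big(4\alpha^2|z-c|^2-2\alpha(N+2-2s)\big)\geq0\qquad\text{for }\tfrac{R}{2}\leq|z-c|\leq R .
\]
Moreover $|y|^{a}\partial_yh=-2\alpha|y|^{a}y\,e^{-\alpha|z-c|^2}\to0$ as $y\to0$, so $h$ is a weak subsolution across $\{y=0\}$.

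On $|z-c|=R/2$ your strict sign gives $V\leq-\delta<0$; here you need $V<0$ also at $y=0$, which you have. On $|z-c|=R$ you have $V\leq0=-\varepsilon h$. The weighted weak maximum principle then yields $V\leq-\varepsilon h$ on the annulus for $\varepsilon<\delta$. Since $\mathcal{V}(t)=0=h(t,0)$ and $\mathcal{V}$ is smooth in $\Omega$, this gives $\partial_v\mathcal{V}(t)\leq-\varepsilon\,\partial_vh(t,0)=-2\varepsilon\alpha Re^{-\alpha R^2}<0$.
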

\begin{proof}
As, by linearity, $\mathcal{V}(x)=\mathcal{U}(x,\overline{t},0)\cdot v$ and, 
by Lemma \ref{lemmaodd}, we have $\mathcal{U}(x,\overline{t},0)\in\mathcal{E}_{\mathcal{O}}$ for $x\in\mathcal{F}_{\mathcal{O}}$, given $v\in\mathcal{E}_{\mathcal{O}}^\perp$ we get
\begin{equation}\label{celo}
\mathcal{V}(t)=0,\quad\text{for}\ {t}\in\mathcal{F}_{\mathcal{O}}.
\end{equation}
Being $\mathcal{V}(x)$ constant along $\mathcal{F}_{\mathcal{O}}=span\{\mathcal{E}_{\mathcal{O}}\}\cap\Omega$, its gradient is orthogonal to $\mathcal{E}_{\mathcal{O}}$ and \eqref{dht} follows. Next, assume that $dim(\mathcal{E}_{\mathcal{O}})=N-1$, so that $\mathcal{E}_{\mathcal{O}}^\perp=span\{v\}$. Hence, $\mathcal{F}_{\mathcal{O}}=\pi_v\cap\Omega$, where $\pi_v=\{x\in\mathbb{R}^N:x\cdot v=0\}$. Since $\Omega$ is a convex domain, the hyperplane $\pi_v$ splits $\Omega$ in two sub-domains $\Omega_1=\{x\in\Omega: x\cdot v<0\}$ and $\Omega_2=\{x\in\Omega: x\cdot v>0\}$, so that 
\begin{equation}\label{pope}
v\cdot\nu(x)\leq0\quad\text{for all}\ x\in\partial\Omega_1\qquad\text{and}\qquad v\cdot\nu(x)\geq0\quad\text{for all}\ x\in\partial\Omega_2.
\end{equation}
Moreover, since $G_{\mathcal{C}_{\Omega}}(x,\overline{t},y)>0$ for $(x,y)\in\mathcal{C}_{\Omega}$ and $G_{\mathcal{C}_{\Omega}}(x,\overline{t},y)=0$ for $(x,y)\in\partial_L\mathcal{C}_{\Omega}$, we also have 
\[
\nu^*(x,y)=-\frac{\nabla_{(x,y)} G_{\mathcal{C}_{\Omega}}(x,\overline{t},y)}{|\nabla_{(x,y)} G_{\mathcal{C}_{\Omega}}(x,\overline{t},y)|}\quad\text{at } (x,y)\in\partial_L\mathcal{C}_{\Omega}.
\]
This, jointly with \eqref{pope}, produces
\begin{equation}\label{yija}
\dfrac{\partial}{\partial v}G_{\mathcal{C}_{\Omega}}(x,\overline{t},y)\geq0\ \text{in } \partial_L\mathcal{C}_{\Omega}\cap\{x\cdot v<0\}\quad\text{and}\quad\dfrac{\partial}{\partial v}G_{\mathcal{C}_{\Omega}}(x,\overline{t},y)\leq0\ \text{in } \partial_L\mathcal{C}_{\Omega}\cap\{x\cdot v>0\}.
\end{equation}
Then, by \eqref{celo} and \eqref{yija}, the function $V$ satisfies 
\begin{equation*}
        \left\{
        \begin{tabular}{rl}
        $-div(y^{1-2s}\nabla V)=0$  &in $\mathcal{C}_{\Omega_1}$, \\
        $V(x,y)\geq0$  &on $\partial_L\mathcal{C}_{\Omega_1}$, \\
        $\frac{\partial }{\partial \nu^s}V(x,0)=0$  & in $\Omega_1\times\{y=0\}$.
        \end{tabular}
        \right.			
\end{equation*}
Hence, by the Maximum Principle (cf. \cite[Lemma 2.6]{Capella2011}), we have $V(x,0)>0$ in $\Omega_1$. Applying the Hopf Lemma to $V(x,y)$ in $\mathcal{C}_{\Omega_1}$, (see \cite[Lemma 2.7]{Capella2011}) we conclude 
\begin{equation*}
\frac{\partial}{\partial v}V(x,0)\Big|_{x={t}}<0,\quad\text{for}\ {t}\in\mathcal{F}_{\mathcal{O}}.
\end{equation*}
\end{proof}
The proof of Theorem \ref{ThF} also relies in the next Lemma \ref{gradrobin} which extends to the fractional setting \cite[Theorem 4.4]{Brezis1989}.
\begin{lemma}\cite[Lemma 3]{Ortega2023}\label{gradrobin} Let $\frac12<s<1$ and $\Omega$ be a smooth bounded domain with $C^2$ boundary. Then, for any $p\in\Omega$,
\begin{equation}\label{grad0}
\nabla\mathcal{R}(t)=\kappa_s\int_{\partial_L\mathcal{C}_{\Omega}}y^{1-2s}\left(\frac{\partial G_{\mathcal{C}_{\Omega}}}{\partial\nu_{(x,y)}^*}(x,t,y)\right)^2\nu(x) d\sigma_{(x,y)},
\end{equation}
that is,
\vspace{-0.2cm}
\begin{equation}\label{grad}
\frac{\partial}{\partial t_i}\mathcal{R}(t)=\kappa_s\int_{\partial_L\mathcal{C}_{\Omega}}y^{1-2s}\left(\frac{\partial G_{\mathcal{C}_{\Omega}}}{\partial\nu_{(x,y)}^*}(x,t,y)\right)^2\nu_i(x) d\sigma_{(x,y)},
\end{equation}
and
\vspace{-0.2cm}
\begin{equation}\label{grad2}
\frac{\partial^2}{\partial t_i\partial t_j}\mathcal{R}_{\Omega}^s(t)=2\kappa_s\int_{\partial_L\mathcal{C}_{\Omega}}y^{1-2s}\frac{\partial G_{\mathcal{C}_{\Omega}}}{\partial t_i}(t,x,y)\frac{\partial}{\partial t_j}\left(\frac{\partial G_{\mathcal{C}_{\Omega}}}{\partial\nu_{(x,y)}^*}(x,t,y)\right) d\sigma_{(x,y)}.
\end{equation}
\end{lemma}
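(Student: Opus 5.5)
The plan is to prove the Brezis--Peletier-type formulas from the explicit identification $\mathcal{U}(x,t,y)=-(\nabla_x+\nabla_t)H_{\mathcal{C}_\Omega}(x,t,y)$ obtained in the proof of Lemma \ref{lemmaodd}, together with the representation formula \eqref{representation2}. Since $\mathcal{R}(t)=H_{\mathcal{C}_\Omega}(t,t,0)$, the chain rule gives
\[
\nabla\mathcal{R}(t)=(\nabla_x+\nabla_t)H_{\mathcal{C}_\Omega}(x,t,0)\big|_{x=t}=-\mathcal{U}(t,t,0).
\]
Evaluating \eqref{representation2} at $\tau=t$ then yields
\[
\nabla\mathcal{R}(t)=\kappa_s\int_{\partial_L\mathcal{C}_{\Omega}} y^{1-2s}\nabla_x G_{\mathcal{C}_{\Omega}}(x,t,y)\,\frac{\partial }{\partial\nu_{(x,y)}^*}G_{\mathcal{C}_{\Omega}}(x,t,y)\,d\sigma_{(x,y)}.
\]
On $\partial_L\mathcal{C}_\Omega$ the function $G_{\mathcal{C}_\Omega}(\cdot,t,\cdot)$ vanishes, so its full gradient is normal to the lateral boundary. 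Because $\nu^*=(\nu(x),0)$, this gives $\nabla_xG_{\mathcal{C}_\Omega}=\frac{\partial G_{\mathcal{C}_\Omega}}{\partial\nu^*}\,\nu(x)$ and $\partial_yG_{\mathcal{C}_\Omega}=0$ there. Substituting this gives \eqref{grad0}, and \eqref{grad} is just its $i$-th component.

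For \eqref{grad2} I would differentiate \eqref{grad} with respect to $t_j$ under the integral sign:
\[
\partial_{t_j}\partial_{t_i}\mathcal{R}(t)=2\kappa_s\int_{\partial_L\mathcal{C}_\Omega} y^{1-2s}\,\frac{\partial G_{\mathcal{C}_\Omega}}{\partial\nu^*}\,\nu_i\,\partial_{t_j}\frac{\partial G_{\mathcal{C}_\Omega}}{\partial\nu^*}\,d\sigma .
\]
Using $\frac{\partial G_{\mathcal{C}_\Omega}}{\partial\nu^*}\nu_i=\partial_{x_i}G_{\mathcal{C}_\Omega}$ on the lateral boundary, together with the symmetry $G_{\mathcal{C}_\Omega}(x,t,y)$ versus $G(t,x,y)$ of the Green function (this is what the notation $\frac{\partial G}{\partial t_i}(t,x,y)$ in \eqref{grad2} encodes), puts the integrand in the stated form.

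The steps that need real justification are the following.
\begin{enumerate}
\item The integration by parts behind \eqref{representation}. This means multiplying \eqref{pu1} by $G_{\mathcal{C}_\Omega}(x,\tau,y)$ and integrating twice over the cylinder, carried out on $\mathcal{C}_\Omega\cap\{y<R\}$ with a small half-ball around $(\tau,0)$ removed, and then letting $R\to\infty$ and the radius go to $0$. The Neumann data $\partial_{\nu^s}U_j=0$ removes the bottom term. The singular half-ball produces $U_j(\tau,t,0)$ via the normalization of the fundamental solution, and the top cap vanishes thanks to the exponential decay in $y$.
\item The integrability on $\partial_L\mathcal{C}_\Omega$ of $y^{1-2s}\big(\partial_{\nu^*}G_{\mathcal{C}_\Omega}\big)^2$ near the corner $y=0$.
\end{enumerate}

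I expect step 2 to be the main obstacle. Near $\partial\Omega\times\{0\}$ the Green function behaves roughly like $\operatorname{dist}(x,\partial\Omega)$ times a function of $y$, and $\partial_{\nu^*}G_{\mathcal{C}_\Omega}$ stays bounded as $y\to0$ on the lateral boundary. So the weight $y^{1-2s}$ is integrable for every $s<1$, and the restriction $\frac12<s<1$ is only needed for the traces and gradient bounds used when differentiating under the integral. I would first try to prove boundedness of $\nabla G_{\mathcal{C}_\Omega}$ up to the lateral boundary away from $(t,0)$. The tool for this is the spectral expansion $G_{\mathcal{C}_\Omega}=\sum_k \lambda_k^{-s}\varphi_k(t)\varphi_k(x)\,\theta(\lambda_k^{1/2}y)$, combined with the $C^2$ regularity of $\partial\Omega$. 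Dominated convergence then justifies both the limits in step 1 and the $t_j$-differentiation.
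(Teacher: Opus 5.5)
Your proposal is correct and is essentially the paper's own argument. The Remark following the lemma obtains \eqref{grad0} from three ingredients: the identity $\mathcal{U}=-(\nabla_x+\nabla_t)H_{\mathcal{C}_\Omega}$, the representation formula \eqref{representation2} evaluated at $\tau=t$, and $\nabla_x G_{\mathcal{C}_\Omega}=\frac{\partial G_{\mathcal{C}_\Omega}}{\partial\nu^*}\,\nu$ on $\partial_L\mathcal{C}_\Omega$. Your sign $\nabla\mathcal{R}(t)=-\mathcal{U}(t,t,0)$ is the correct one; the Remark drops this minus sign in an intermediate equality but reaches the same final formula. The paper does not re-derive \eqref{grad2} and quotes it from the earlier work, so your differentiation of \eqref{grad} under the integral sign, combined with the convention \eqref{partiali}, is the natural way to recover it.
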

\begin{remark}
{\rm The hypothesis $\frac{1}{2}<s<1$ assumed in Lemma \ref{gradrobin} underlies in the regularity results (cf. \cite[Theorem 1.5]{Caffarelli2016}) used in the proof of the identity \eqref{grad} in \cite{Ortega2023}. However, the explicit expression $\mathcal{U}(x,t,0)=-(\nabla_x+\nabla_t)H_{\Omega}(x,t)$ jointly with \eqref{representation2} produces a short proof of \eqref{grad0} that avoids such restriction. Actually, contrary to the argument in \cite{Ortega2023} in which a convergence of approximated functions needs to be justified by means of elliptic regularity, this approach directly produces
\[\begin{split}
\nabla \mathcal{R}(p)&=(\nabla_x+\nabla_t)H_{\Omega}(x,t)\big|_{(x,t)=(p,p)}=\mathcal{U}(p,p,0)=\kappa_s\int_{\partial_L\mathcal{C}_{\Omega}} y^{1-2s}\nabla_x G_{\mathcal{C}_{\Omega}}(x,p,y)\frac{\partial }{\partial\nu_{(x,y)}^*}G_{\mathcal{C}_{\Omega}}(x,p,y)d\sigma_{(x,y)}\\
&=\kappa_s\int_{\partial_L\mathcal{C}_{\Omega}}y^{1-2s}\left(\frac{\partial G_{\mathcal{C}_{\Omega}}}{\partial\nu_{(x,y)}^*}(x,p,y)\right)^2\nu(x) d\sigma_{(x,y)}.
\end{split}\]
}
\end{remark}
\begin{proof}[Proof of Theorem \ref{ThF}]
Because of \eqref{representation} and Lemma \ref{gradrobin},
\[
\begin{split}
U_j(\overline{t},\overline{t},0)&=-\kappa_s\int_{\partial_L\mathcal{C}_{\Omega}} y^{1-2s}\frac{\partial}{\partial x_j}G_{\mathcal{C}_{\Omega}}(x,\overline{t},y)\frac{\partial }{\partial\nu_{(x,y)}^*}G_{\mathcal{C}_{\Omega}}(x,\overline{t},y)d\sigma_{(x,y)}\\
&=-\kappa_s\int_{\partial_L\mathcal{C}_{\Omega}}y^{1-2s}\left(\frac{\partial G_{\mathcal{C}_{\Omega}}}{\partial\nu_{(x,y)}^*}(x,\overline{t},y)\right)^2\nu_i(x) d\sigma_{(x,y)}\\
&=-\frac{\partial}{\partial t_j}\mathcal{R}(\overline{t}),
\end{split}
\]
that is, $\mathcal{U}(\overline{t},\overline{t},0)=-\nabla\mathcal{R}(\overline{t})$ for $\overline{t}\in\mathcal{F}_{\mathcal{O}}$. Due to Lemma \ref{lemmaodd}, we conclude
\[
\nabla \mathcal{R}(\overline{t})\in\mathcal{E}_{\mathcal{O}}\quad\text{for}\ \overline{t}\in\mathcal{F}_{\mathcal{O}}.
\]
Next, since  $G_{\mathcal{C}_{\Omega}}(x,t,y)=G_{\mathcal{C}_{\Omega}}(t,x,y)$, we have 
\begin{equation}\label{partiali}
\frac{\partial}{\partial x_j}G_{\mathcal{C}_{\Omega}}(x,t,y)=\frac{\partial}{\partial t_j}G_{\mathcal{C}_{\Omega}}(t,x,y).
\end{equation}
Then, differentiating \eqref{representation} with respect to $t_i$ and using \eqref{partiali} together with \eqref{grad2}, we get 
\begin{equation}\label{llp}
\begin{split}
\frac{\partial}{\partial t_i}U_j(t,\overline{t},0)\big|_{t=\overline{t}}&=-\kappa_s\int_{\partial_L\mathcal{C}_{\Omega}} y^{1-2s}\frac{\partial}{\partial x_j}G_{\mathcal{C}_{\Omega}}(x,\overline{t},y)\frac{\partial}{\partial t_i}\left(\frac{\partial }{\partial\nu_{(x,y)}^*}G_{\mathcal{C}_{\Omega}}(x,\overline{t},y)\right)d\sigma_{(x,y)}\\
&=-\kappa_s\int_{\partial_L\mathcal{C}_{\Omega}} y^{1-2s}\frac{\partial}{\partial t_j}G_{\mathcal{C}_{\Omega}}(\overline{t},x,y)\frac{\partial}{\partial t_i}\left(\frac{\partial }{\partial\nu_{(x,y)}^*}G_{\mathcal{C}_{\Omega}}(x,\overline{t},y)\right)d\sigma_{(x,y)}\\
&=-\frac{1}{2}\frac{\partial^2}{\partial t_i\partial t_j}\mathcal{R}(\overline{t}).
\end{split}
\end{equation}
Let $\mathcal{V}(x)=V(x,0)$ with $V(x,y)$ the solution to \eqref{guapa} with $v\in\mathcal{E}_{\mathcal{O}}^\perp$. Then, the equation \eqref{llp} produces,
\begin{equation*}
\frac{\partial}{\partial t_i}\mathcal{V}(t)\big|_{t=\overline{t}}=\frac{\partial}{\partial t_i}\left(\mathcal{U}(t,\overline{t},0)\cdot v\right)\big|_{t=\overline{t}}=\sum_{j=1}^{N}\left(\frac{\partial}{\partial t_i}U_j(t,\overline{t},0)\big|_{t=\overline{t}}\right)v_j=-\frac{1}{2}\sum_{j=1}^{N}\frac{\partial^2}{\partial t_i\partial t_j}\mathcal{R}(\overline{t})v_j.
\end{equation*}
Denoting by $\mathbb{H}(t)$ the Hessian Matrix of the Robin function, the above reads
\begin{equation}\label{loki2}
\nabla \mathcal{V}(\overline{t})=-\frac{1}{2}\mathbb{H}(\overline{t}) v,
\end{equation}
Then, the first inclusion in \eqref{pisto} follows by \eqref{dht}. Moreover, since $\mathbb{H}$ is a symmetric matrix, given $v\in\mathcal{E}_{\mathcal{O}}$, we have $\mathbb{H}(\overline{t}) v\cdot w=v\cdot\mathbb{H}(\overline{t}) w=0$,
because $\mathbb{H}(\overline{t}) w\in\mathcal{E}_{\mathcal{O}}^\perp$ for any $w\in\mathcal{E}_{\mathcal{O}}^\perp$. Hence, $\mathbb{H}(\overline{t})v\in \mathcal{E}_{\mathcal{O}}$ for $v\in \mathcal{E}_{\mathcal{O}}$. Finally, assume that $dim(\mathcal{E}_{\mathcal{O}})=N-1$ so $\mathcal{E}_{\mathcal{O}}^\perp=span\{v\}$ for some unitary vector $v\in\mathbb{R}^N$. Then, since $\nabla \mathcal{R}(\overline{t})\in\mathcal{E}_{\mathcal{O}}$ for $\overline{t}\in\mathcal{F}_{\mathcal{O}}=\pi_v\cap\Omega$,
\begin{equation}\label{sao1}
\frac{\partial}{\partial v}\mathcal{R}(\overline{t})=\nabla \mathcal{R}(\overline{t})\cdot v=0,\qquad\text{for}\ \overline{t}\in \pi_v\cap\Omega.
\end{equation}
Moreover, by \eqref{tabaco},
\begin{equation}\label{p2p}
0>\frac{\partial}{\partial v}\mathcal{V}(\overline{t})=\nabla\mathcal{V}(\overline{t})\cdot v=-\frac{1}{2}(\mathbb{H}(\overline{t}) v)\cdot v=-\frac12 v^\top\mathbb{H}(\overline{t}) v,
\end{equation}
and \eqref{pisto2} follows. Indeed, fixed $\overline{t}\in\mathcal{F}_{\mathcal{O}}=\pi_v\cap\Omega$, by \eqref{dht} we get $\nabla\mathcal{V}(\overline{t})=\mu v\quad\text{for some}\ \mu\in\mathbb{R}$. Then, \eqref{loki2} gives us
\begin{equation}\label{sao2}
\mathbb{H}(\overline{t}) v=-2\mu v.
\end{equation}
Since, by \eqref{p2p}, we have $\mu<0$, then \eqref{pisto2} follows by taking $\eta=-2\mu>0$.
\end{proof}
\begin{proof}[Proof of Corollary \ref{coro2}]
Let $v_1,\ldots,v_k\in\mathbb{R}^N$, $1\leq k\leq N$, be linearly independent unitary vectors and take $\mathcal{F}_{\cap}$ the set of points invariant under the action of each $\Pi_{v_j}$ for $j=1,\ldots,k$, namely, $\mathcal{F}_{\cap}=\bigcap_{j=1}^{k}(\pi_{v_j}\cap\Omega)$.
Let us also take $\mathcal{E}_{\cap}=\{w\in\mathbb{R}^N:\ \Pi_{v_j}w=w\ \text{for all}\ j=1,\ldots, k\}$. Clearly, $\mathcal{E}_{\cap}=(span\{v_1,\ldots,v_k\})^\perp$ so $dim(\mathcal{E}_{\cap})= N-k$. By \eqref{sao1} and \eqref{sao2}, given $\overline{t}\in\mathcal{F}_{\cap}$, we have
\[
\frac{\partial}{\partial v_j}\mathcal{R}(\overline{t})=0\qquad\text{and}\qquad \mathbb{H}(\overline{t}) v_j=\eta_jv_j\quad\text{with }\eta_j>0\qquad\text{for}\ j=1,\ldots,k.\]
Thus, $\mathbb{H}(\overline{t})$ has $1\leq k\leq N$ positive eigenvalues.
If $k=N$ then $\mathcal{F}_{\cap}=\{0\}$ and $\mathcal{E}_{\cap}=\{0\}$. By Lemma \ref{lemmaodd}, we have $\mathcal{U}(0,0,0)\in\mathcal{E}_{\cap}$ so that $\mathcal{U}(0,0,0)=\{0\}$. Then, the origin is a critical point of the Robin function and the Hessian matrix $\mathbb{H}(0)$ is a positive definite matrix with eigenvectors $v_1,\ldots,v_N$ associated to the eigenvalues $\lambda_1,\ldots,\lambda_N$ respectively. In particular, the origin is a non degenerated critical point of $\mathcal{R}(x)$.
\end{proof}

\section*{Acknowledgment}
\noindent A. Ortega is partially funded by the Vicerrectorado de Investigación, Transferencia y Divulgación Científica of the Universidad Nacional de Educación a Distancia under the Plan de Promoción de Investigación through the research project \textit{Proyectos Investigación Tipo A. EDP no locales con dato de frontera mixto Dirichlet-Neumann}, Ref. 2026/00131/001.



\end{document}